\newtheorem{propo}{Proposition}[section]
\newtheorem{corol}[propo]{Corollary}
\newtheorem{theor}[propo]{Theorem}
\newtheorem{resul}[propo]{Result}
\newtheorem{lemma}[propo]{Lemma}
\newtheorem{conje}[propo]{Conjecture}
\theoremstyle{definition}
\newtheorem{defin}[propo]{Definition}
\theoremstyle{remark}
\newtheorem{oprob}[propo]{Open problem}
\numberwithin{equation}{section}
\newcommand{\CC }{\mathbb{C}}
\newcommand{\QQ }{\mathbb{Q}}
\newcommand{\ZZ }{\mathbb{Z}}
\newcommand{\Ac }{\mathcal{A}}
\newcommand{\Vc }{\mathcal{V}}
\DeclareMathOperator{\Aut}{Aut}
\newcommand{\IF }{\mathcal{IF}}
\newcommand{\RF }{\mathcal{RF}}
\DeclareMathOperator{\Der}{Der}
\DeclareMathOperator{\pdeg}{pdeg}
\DeclareMathOperator{\rk}{rk}
\newcommand{\lmultiset }{\{\hspace{-4.7pt}\{}
\newcommand{\rmultiset }{\}\hspace{-4.7pt}\}}
\title[Free, not recursively free and non rigid arrangements]
{Free, not recursively free\\ and non rigid arrangements}
\author{M.~Cuntz}
\address{Michael Cuntz,
Institut für Algebra, Zahlentheorie und Diskrete Mathematik,
Fakultät für Mathematik und Physik,
Leibniz Universität Hannover,
Welfengarten 1,
D-30167 Hannover, Germany}
\email{cuntz@math.uni-hannover.de}
\begin{document}

\begin{abstract}
We construct counterexamples to Yoshinaga's conjecture that every free arrangement is either inductively free or rigid in characteristic zero. The smallest example has $13$ hyperplanes, its intersection lattice has a one dimensional moduli space, and it is free but not recursively free.
\end{abstract}

\maketitle

\section{Introduction}

Let $\Ac$ be an arrangement of hyperplanes. To $\Ac$ we may associate its module of derivations $D(\Ac)$ and its intersection lattice $L(\Ac)$, see Section \ref{prel} for definitions. If $D(\Ac)$ is a free module, then $L(\Ac)$ satisfies certain strong combinatorial properties, for example that the characteristic polynomial of $L(\Ac)$ factorizes into degree one polynomials over the integers:
\begin{theor}[{\cite[Thm.\ 4.137]{OT}}] \label{Thm4137} 
Let $\Ac$ be a free arrangement and $e_1,\ldots,e_r$ be the degrees of a homogeneous basis of $D(\Ac)$. Then
\[ \chi(\Ac,t) = \prod_{i=1}^r (t-e_i). \]
\end{theor}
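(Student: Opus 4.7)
The plan is to derive the factorization from the Solomon--Terao formula, which for any central arrangement of rank $r$ expresses the characteristic polynomial as a limit of a generating function assembled from the graded Hilbert series of the modules $D^p(\Ac)$ of $p$-derivations:
\[
\chi(\Ac,t)\;=\;c_r\lim_{x\to 1}\sum_{p=0}^{r} H\bigl(D^p(\Ac);\,x\bigr)\,\phi(t,x)^p,
\]
with an explicit polynomial $\phi(t,x)$ linear in $t$ and a global sign $c_r\in\{\pm 1\}$. This identity holds for every arrangement, free or not, and so the problem is reduced to computing the Hilbert series of the modules $D^p(\Ac)$ under the freeness hypothesis.

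Under the assumption that $D(\Ac)=\bigoplus_{i=1}^r S\cdot\theta_i$ is free with $\deg\theta_i=e_i$, the higher derivation module $D^p(\Ac)$ is to be identified with the exterior power $\bigwedge^p D(\Ac)$; this step is where freeness is used essentially. From this one reads off
\[
H\bigl(D^p(\Ac);x\bigr) \;=\; \frac{e_p(x^{e_1},\ldots,x^{e_r})}{(1-x)^r},
\]
where $e_p$ denotes the $p$-th elementary symmetric polynomial. Summing over $p$ and applying the generating-function identity $\sum_p e_p(y_1,\ldots,y_r)\,z^p=\prod_i(1+y_iz)$ with $y_i=x^{e_i}$ and $z=\phi(t,x)$ collapses the sum inside the Solomon--Terao formula to
\[
\frac{1}{(1-x)^r}\prod_{i=1}^r\bigl(1+x^{e_i}\phi(t,x)\bigr).
\]

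The final step is the evaluation of the limit. Each factor in the numerator admits a factorization $1+x^{e_i}\phi(t,x)=(1-x)\,q_i(t,x)$ coming from $1-x^{e_i}=(1-x)(1+x+\cdots+x^{e_i-1})$; the $r$ resulting factors of $(1-x)$ absorb the denominator $(1-x)^r$ exactly, and at $x=1$ each $q_i(t,1)$ reduces to a linear factor of the form $t\pm e_i$. Combined with the overall sign $c_r$, the product evaluates to $\prod_{i=1}^r(t-e_i)$, as required.

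The main technical obstacle is the identification $D^p(\Ac)\cong\bigwedge^p D(\Ac)$ for free $\Ac$: one must verify that the natural wedge map, which on a basis sends $\theta_{i_1}\wedge\cdots\wedge\theta_{i_p}$ to the corresponding $p$-derivation, is an isomorphism of graded $S$-modules with the claimed shifts. Once this is in place, the remainder of the argument is generating-function bookkeeping together with careful tracking of signs in the Solomon--Terao formula.
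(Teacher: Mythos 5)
Your proposal is correct and follows essentially the same route as the source the paper cites for this statement (the paper gives no proof of its own, deferring to \cite[Thm.\ 4.137]{OT}, and the argument there is exactly yours): the Solomon--Terao formula, the identification $D^p(\Ac)\cong\bigwedge^p D(\Ac)$ under freeness, and the resulting Hilbert series computation with the elementary symmetric polynomial identity. The bookkeeping works out as you anticipate, with $\phi(t,x)=t(x-1)-1$, $c_r=(-1)^r$, and each factor $1+x^{e_i}(t(x-1)-1)=(1-x)\bigl(1+x+\cdots+x^{e_i-1}-tx^{e_i}\bigr)$ reducing to $e_i-t$ at $x=1$.
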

On the other hand, there are certain strong combinatorial properties of $L(\Ac)$ which imply that $D(\Ac)$ is a free module, for example if $L(\Ac)$ is \emph{inductively free}.
The holy grail in the field of arrangements is Terao's conjecture (see \cite{p-hT-80}, \cite{p-hT-81}, \cite{p-hT-83}, \cite{p-gZ-86}, \cite{p-gZ-89}, \cite{p-gZ-90}, \cite{OT}, \cite{p-Y-12}):
\begin{oprob}[Terao]\label{probter}
Is the freeness of the module of derivations of any arrangement over a fixed field $K$ a purely combinatorial property of its intersection lattice?
\end{oprob}
To be more precise: are there arrangements $\Ac_1,\Ac_2$ in the same vector space $V$ such that $L(\Ac_1)\cong L(\Ac_2)$, $D(\Ac_1)$ is free, and $D(\Ac_2)$ is not free?
We formulate Terao's Conjecture as an open problem here because there is a lot of evidence that it is wrong;
we quote Ziegler \cite{p-gZ-90}: ``We believe that Terao's conjecture over large fields is in fact false.'' In fact I suppose that Terao himself has doubts that the answer is `yes'.

Another formulation of Problem \ref{probter} is using the moduli space of all arrangements whose intersection lattice is a given lattice, see Section \ref{modulispace} for details:
\begin{oprob}
Does a lattice $L$ exist such that the moduli space $\Vc(L)$ contains a free and a non-free arrangement?
\end{oprob}

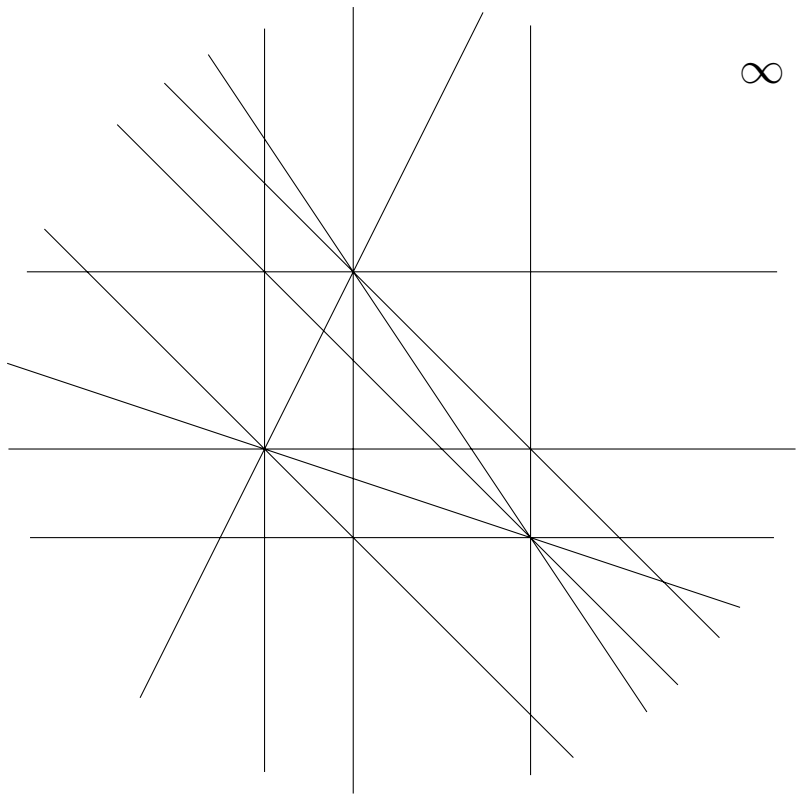
\begin{figure}
\begin{center}
\setlength{\unitlength}{0.75pt}
\begin{picture}(400,400)(100,200)
\moveto(275.403252247502313339499089644,598.481737195138434731455900375)
\lineto(275.403252247502313339499089644,201.518262804861565268544099625)
\moveto(498.481737195138434731455900375,375.403252247502313339499089644)
\lineto(101.518262804861565268544099625,375.403252247502313339499089644)
\moveto(439.167609485287577515505169793,256.360254559712843091676482869)
\lineto(156.360254559712843091676482869,539.167609485287577515505169793)
\moveto(470.504748702514523981268673470,295.462300245832978482848070577)
\lineto(100.873581185488167904693903570,418.672689418175097175039660544)
\moveto(230.681892697506519411315616269,587.603304874940835888685984162)
\lineto(230.681892697506519411315616269,212.396695125059164111314015838)
\moveto(364.845971347493901195866036393,589.195665912303604228199767981)
\lineto(364.845971347493901195866036393,210.804334087696395771800232018)
\moveto(487.603304874940835888685984162,330.681892697506519411315616270)
\lineto(112.396695125059164111314015838,330.681892697506519411315616270)
\moveto(386.440473505466700388082664215,219.644671439542132362732041699)
\lineto(119.644671439542132362732041699,486.440473505466700388082664215)
\moveto(489.195665912303604228199767982,464.845971347493901195866036393)
\lineto(110.804334087696395771800232018,464.845971347493901195866036393)
\moveto(460.106753515647283469601637540,280.142470079348931065763488497)
\lineto(180.142470079348931065763488497,560.106753515647283469601637540)
\moveto(340.870050249808744105836497484,595.779567352106762728540852073)
\lineto(167.898376268199836280669216832,249.836219388888947078206290770)
\moveto(423.506575597486519570920613016,242.690986322517591848733751335)
\lineto(202.294208758280284618416006238,574.509536581326944277490661501)
\thinlines
\strokepath
\put(470,560){\Large $\infty$}
\end{picture}
\end{center}
\caption{An arrangement with $13$ hyperplanes which is free but not recursively free and whose intersection lattice defines a one dimensional moduli space (the symbol $\infty$ stands for the line at infinity).\label{a13m1}}
\end{figure}

Not much is known about this moduli space regarding freeness; Yuzvinsky \cite{p-sY-93} proved that the free arrangements form an open subset in $\Vc(L)$ (see also \cite[Thm.\ 1.50]{p-Y-12} for the case of dimension three).
The first intuition one gets when working with these notions is, that either the moduli space is very big and the free arrangements are inductively free, or the moduli space is zero dimensional and all its elements are Galois conjugates (let us call such an arrangement \emph{rigid}).
Yoshinaga proposed to check the following conjecture which is stronger than Terao's conjecture since the property of being free is invariant under Galois automorphisms:
\begin{conje}[Yoshinaga, {\cite[p.\ 20, (11)]{p-Y-12}}]
\[ \{\text{Free arrangements}\} \subset \{\text{Inductively free}\} \cup \{\text{Rigid}\}.\]
\end{conje}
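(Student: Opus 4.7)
The statement to address is Yoshinaga's Conjecture, and the abstract makes clear that the paper intends to \emph{refute} it. My plan is therefore to exhibit an explicit counterexample, namely the arrangement $\Ac_{13}$ of $13$ lines depicted in Figure~\ref{a13m1}, and to verify three things: (a) $\Ac_{13}$ is free, (b) $\Ac_{13}$ is not recursively free (so \emph{a fortiori} not inductively free), and (c) the moduli space $\Vc(L(\Ac_{13}))$ is positive-dimensional, so $\Ac_{13}$ is not rigid. Together (a)--(c) place $\Ac_{13}$ outside both classes on the right-hand side of Yoshinaga's inclusion.

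For (c), I would first read off the combinatorics of $\Ac_{13}$ from the picture (multiplicities of intersection points, incidences), encode these as polynomial equations on the coefficients of defining linear forms, and show by elimination that the solution variety is irreducible of dimension one modulo the obvious $\PGL$-action; this is a finite, mechanical computation. For (a), the factorization of the characteristic polynomial predicted by Theorem~\ref{Thm4137} suggests the exponents, and I would construct an explicit homogeneous basis of $D(\Ac_{13})$ of the corresponding degrees, then verify via Saito's criterion that its determinant is a scalar multiple of $\prod_{H\in\Ac_{13}}\alpha_H$. Since these two checks involve only linear algebra and polynomial manipulation in three variables, I would carry them out with a computer algebra system over the parametrized moduli space, which simultaneously confirms freeness at every generic point.

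The hard part is (b): recursively free is a \emph{much} weaker property than inductively free, because the recursion is allowed to pass through arrangements with non-free ingredients as long as a valid deletion--restriction triple exists at each step, and moreover one is allowed to add as well as delete hyperplanes. To certify that $\Ac_{13}$ is not recursively free, one must in principle explore an unbounded tree of arrangements reachable by elementary moves. My plan here is to find a combinatorial obstruction preserved by all admissible moves: enumerate (up to symmetry) every hyperplane $H$ one could add to or delete from $\Ac_{13}$ such that the triple $(\Ac,\Ac',\Ac'')$ satisfies Terao's addition--deletion theorem, check whether the resulting $\Ac\cup\{H\}$ or $\Ac\setminus\{H\}$ is free with the correct exponents, and then argue that in every such case the new arrangement is itself neither inductively free nor a terminal free arrangement, so the recursion cannot bottom out. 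Making this finite requires a careful use of the exponents: since $|\Ac_{13}|=13$ is small and the exponents are rigid integers, the number of admissible triples at each stage is finite and can be enumerated exhaustively by computer.

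Finally, once the computations confirm all three properties over the one-parameter family of arrangements with lattice $L(\Ac_{13})$, the Galois-invariance of freeness implies that a \emph{generic} member is neither rigid nor (recursively) free-by-induction, yielding an infinite family of counterexamples and establishing that the inclusion in the conjecture fails.
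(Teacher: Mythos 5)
Your overall strategy --- exhibit the $13$-line arrangement of Figure~\ref{a13m1}, verify by computation that it is free, that its moduli space is one-dimensional (hence it is not rigid), and that it is not inductively free --- is exactly the route the paper takes, and parts (a) and (c) of your plan match the paper's computations (Singular for freeness; an explicit one-parameter family $\Ac_\omega$ together with the algorithm of \cite{p-C10b} for the moduli space). The genuine gap is in how you obtain ``not inductively free.'' You derive it from the stronger claim ``not recursively free,'' but your certification of the latter does not terminate as described: finiteness of the admissible addition--deletion triples \emph{at each stage} does not bound the depth of the search tree, because clause (3) of Definition~\ref{def:recfree} lets one pass to $\Ac\cup\{H\}$, then to $\Ac\cup\{H,H'\}$, and so on indefinitely. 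You would need to show that \emph{no} admissible addition exists at all (i.e.\ for every candidate $H$ the enlarged arrangement is non-free or the exponent condition of Theorem~\ref{adddel} fails), or exhibit an invariant preserved by all moves; ``the new arrangement is neither inductively free nor terminal'' is not such an argument, and since your refutation of the conjecture routes entirely through this step, the gap propagates to the main claim.

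The fix is to decouple the two properties, as the paper does. Since $\chi(\Ac_t,x)=(x-1)(x-6)^2$ forces $\exp\Ac_t=\lmultiset 1,6,6\rmultiset$ while every restriction $\Ac_t^H$ has exactly $6$ points, Lemma~\ref{strnotindfree} immediately yields that $\Ac_t$ is not inductively free --- a one-line combinatorial check that, together with your (a) and (c), already disproves the conjecture. Non-recursive-freeness should then be treated as a separate, harder claim requiring the (in your sketch incomplete) enumeration argument. Finally, your closing appeal to Galois invariance is unnecessary: non-rigidity follows directly from $\dim\Vc_\CC(L(\Ac_t))=1$, by the paper's definition of rigid.
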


Since Terao's conjecture is trivial for arrangements in dimension two but still open in dimension three, we will concentrate on dimension three in this note.
A recent result by Abe illustrates that the case of dimension three is somewhat special:
\begin{theor}[{\cite[Corollary 1.5]{p-tA-13}}]
Let $\Ac,\Ac'$ be a deletion pair of arrangements in $K^3$, i.e.\ $\Ac'=\Ac\backslash\{H\}$ for some $H\in\Ac$. Then if the characteristic polynomials $\chi(\Ac)$ and $\chi(\Ac')$ have a common root, then both $\Ac$ and $\Ac'$ are free.
\end{theor}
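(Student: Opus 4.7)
My plan is to exploit the deletion--restriction formula for the characteristic polynomial together with Terao's addition--deletion theorem, leveraging the fact that in dimension three the restriction $\Ac^H$ is automatically free: every central line arrangement in $K^2$ is free with exponents $(1,m-1)$, where $m:=|\Ac^H|$.

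The starting point is the identity $\chi(\Ac,t)=\chi(\Ac',t)-\chi(\Ac^H,t)$ together with
\[ \chi(\Ac^H,t)=(t-1)\bigl(t-(m-1)\bigr). \]
If $r$ is a common root of $\chi(\Ac,t)$ and $\chi(\Ac',t)$, plugging $t=r$ into this identity forces $(r-1)(r-(m-1))=0$, hence $r\in\{1,\,m-1\}$. Since $t=1$ is always a root of $\chi$ for a central arrangement and carries no information, the content of the theorem is the case $r=m-1$. In that case $m-1$ is simultaneously a root of $\chi(\Ac,t)$ and an exponent of the free arrangement $\Ac^H$, which is precisely the hypothesis of Abe's division theorem: if $\Ac^H$ is free and the non-trivial factor $t-(m-1)$ of $\chi(\Ac^H,t)$ divides $\chi(\Ac,t)$, then $\Ac$ is free with $m-1$ among its exponents. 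The same argument applied to $\Ac'$ gives its freeness, and Terao's addition--deletion theorem then automatically pairs up the two exponent multisets.

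The step I expect to be the main obstacle is the division statement itself. Reading off the factorisation of $\chi(\Ac,t)$ over the integers is famously insufficient for freeness---this is essentially Terao's conjecture---so one really has to construct derivations realising the predicted exponents. The standard way to proceed is to lift a free basis of $D(\Ac^H)$ from $H$ to a pair of homogeneous derivations on $V$, adjoin the Euler derivation, and verify freeness via the Saito determinant criterion; the hypothesis $r=m-1$ is exactly what forces the Saito determinant to have the correct degree and vanishing locus. A minor bookkeeping nuisance is the automatic shared root $t=1$, which must be separated off before the argument sees the non-trivial content; modulo this, the case $r=m-1$ contains the entire geometric input of the theorem.
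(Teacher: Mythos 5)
The statement you are proving is quoted in the paper from Abe's work (\cite[Corollary 1.5]{p-tA-13}) and is given there without proof, so the only comparison available is with Abe's own argument. Your elementary reduction is correct and is indeed the right first step: deletion--restriction gives $\chi(\Ac,t)=\chi(\Ac',t)-\chi(\Ac^H,t)$ with $\chi(\Ac^H,t)=(t-1)(t-(m-1))$, so a common root $r$ must satisfy $r\in\{1,m-1\}$, and (as you correctly observe) the statement only has content when read for the reduced polynomials $\chi_0=\chi/(t-1)$, in which case one computes $r=m-1=|\Ac^H|-1$ exactly. Your closing remark is also essentially right: once $\Ac$ is known to be free, the freeness of $\Ac'$ follows from Theorem \ref{adddel}, since $\exp\Ac''=\lmultiset 1,m-1\rmultiset\subset\lmultiset 1,m-1,n-m\rmultiset=\exp\Ac$. (Your phrasing ``the same argument applied to $\Ac'$'' is not literally available, since $H\notin\Ac'$.)

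The genuine gap is the step you yourself flag as the main obstacle, and your proposed repair does not work. Deducing freeness of $\Ac$ from the fact that $|\Ac^H|-1$ is a root of $\chi_0(\Ac)$ is the entire content of the theorem; invoking Abe's Division Theorem here is circular in spirit (that is a later and substantially deeper result, of which the present statement is essentially the rank-three prototype), and your sketch of how to prove it --- ``lift a free basis of $D(\Ac^H)$ from $H$ to a pair of homogeneous derivations on $V$, adjoin the Euler derivation, and verify Saito's criterion'' --- is not a valid argument. There is no lifting map from $D(\Ac^H)$ to $D(\Ac)$: the natural map goes in the opposite direction (the Ziegler restriction $D(\Ac)\to D(\Ac^H,m^H)$ onto the module of derivations of the restricted \emph{multi}arrangement), and its surjectivity is, by Yoshinaga's criterion, essentially equivalent to the freeness of $\Ac$ that you are trying to establish. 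Abe's actual proof runs through exactly this machinery: one compares $b_2^0(\Ac)=(m-1)(n-m)$ with the product $e_1e_2$ of the exponents of the Ziegler restriction multiarrangement on $H$ (which satisfy $e_1+e_2=n-1$ together with known bounds for rank-two multiarrangements), uses the general inequality between these two quantities, and concludes freeness from the equality case of Yoshinaga's criterion. Without some substitute for this input, your proposal reduces the theorem to an unproved statement that is at least as strong.
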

Thus if there is a counterexample $L$ to Terao's Conjecture in dimension three, then $L$ cannot be inductively free, and a recursively free arrangement $\Ac$ with $L=L(\Ac)$ is very unlikely to exist.
Another recent progress is our counterexample with Hoge \cite{p-CH-13} to the conjecture (see \cite[3.6]{gZ-87}, \cite[4.3]{OT}, \cite[5]{p-S-99}) that free arrangements are recursively free in characteristic zero.
Although recursive freeness is not a combinatorial property of the intersection lattice, it thus appears to be a good idea to investigate the (probably small) class of arrangements which are free but not recursively free. The smallest example provided in \cite{p-CH-13} has $27$ hyperplanes. A natural question is thus how many hyperplanes such an example will have at least.

\begin{resul}\label{mainres}
In this note we present
\begin{enumerate}
\item a lattice $L$ such that the moduli space $\Vc_\CC(L)$ has dimension one and consists mostly of arrangements with 13 hyperplanes which are free but not recursively free,
\item and a lattice $L'$ such that the moduli space $\Vc_\CC(L')$ has dimension one and consists mostly of arrangements with 15 hyperplanes which are free but not recursively free.
\end{enumerate}
\end{resul}
We thus obtain:
\begin{corol}
Yoshinaga's conjecture about free, inductively free, and rigid arrangements is wrong.
\end{corol}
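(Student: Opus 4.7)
The plan is to deduce the corollary directly from Result \ref{mainres} by checking that the arrangements exhibited there violate both disjuncts of Yoshinaga's conjecture simultaneously. Pick any arrangement $\Ac$ from one of the one-parameter families of Result \ref{mainres}, say from the family of $13$-hyperplane arrangements depicted in Figure \ref{a13m1}. By construction $\Ac$ is free, so it lies on the left-hand side of the inclusion in Yoshinaga's conjecture; the task is to verify that it lies in neither class on the right-hand side.

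To rule out inductive freeness, I would invoke the standard implication that every inductively free arrangement is recursively free (the chain of containments is $\IF \subset \RF \subset \{\text{free}\}$, where recursive freeness merely loosens the inductive construction by also allowing restriction steps to increase the number of hyperplanes). Since Result \ref{mainres} asserts that $\Ac$ is \emph{not} recursively free, it is a fortiori not inductively free. So the only delicate point here is to quote (or to have established) the containment $\IF \subset \RF$, which is immediate from the definitions.

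To rule out rigidity, I would use the assertion that the moduli space $\Vc_\CC(L(\Ac))$ has dimension one. Rigidity means that $\Vc_\CC(L(\Ac))$ consists of finitely many points, all of which are Galois conjugate; in particular a rigid arrangement forces $\dim \Vc_\CC(L(\Ac)) = 0$. Since the moduli space in Result \ref{mainres} is one-dimensional, $\Ac$ cannot be rigid. Combining both observations, $\Ac$ is free but belongs neither to $\IF$ nor to the class of rigid arrangements, so the conjectured inclusion fails.

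The main obstacle is not in this corollary itself, but in Result \ref{mainres}, which provides the required arrangement: one must explicitly exhibit the lattice $L$, compute its one-dimensional moduli space, verify freeness for a generic member of the family (e.g.\ by producing a homogeneous basis of $D(\Ac)$ or by applying Theorem \ref{Thm4137} together with Saito's criterion), and then rule out recursive freeness — typically by exhausting all possible deletion/restriction chains and showing that none leads to a free arrangement, which is a finite but potentially large combinatorial check. Once Result \ref{mainres} is in hand, the corollary is a one-line consequence.
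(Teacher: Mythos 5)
Your proposal is correct and follows essentially the same route as the paper, which presents the corollary as an immediate consequence of Result \ref{mainres}: not recursively free implies not inductively free (since $\IF\subset\RF$ directly from Definitions \ref{def:indfree} and \ref{def:recfree}), and a one-dimensional moduli space rules out rigidity. The only quibble is your parenthetical description of recursive freeness, which actually loosens the inductive construction by permitting \emph{deletion} steps (clause (3) of Definition \ref{def:recfree}) rather than ``restriction steps that increase the number of hyperplanes''; this does not affect the argument, since only the containment $\IF\subset\RF$ is used.
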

I would like to emphasize that to my knowledge the two lattices presented here are the only two known such examples. Lattices satisfying these strong properties appear to be extremely rare.

After posting the first version of this paper, I was informed by Professor Takuro Abe that Abe, Kawanoue, and Nozawa also found the arrangement with 13 hyperplanes independently, see \cite{p-AKN-14}.

\section{Preliminaries}\label{prel}

We shortly review the required notions, compare with \cite{OT}.

\begin{defin}
Let $\Ac$ be a central arrangement of hyperplanes, i.e.\ a finite set of linear hyperplanes in a fixed vector space $V$ over a field $K$.
Let $S=S(V^*)$ denote the symmetric algebra of the dual space $V^*$ of $V$.
We choose a basis $x_1,\ldots,x_r$ for $V^*$ and identify $S$ with
$K[x_1,\ldots,x_r]$ via the natural isomorphism $S\cong K[x_1,\ldots,x_r]$.
We write $\Der(S)$ for the set of derivations of $S$ over $K$.
It is a free $S$-module with basis $D_1,\ldots,D_r$ where $D_i$ is the usual derivation
$\partial/\partial x_i$.

A nonzero element $\theta\in\Der(S)$ is \emph{homogeneous of polynomial degree} $p$
if $\theta=\sum_{k=1}^r f_k D_k$ and $f_k\in S_p$ for $a\le k\le r$.
In this case we write $\pdeg \theta = p$.
Let $\Ac$ be an arrangement in $V$ with defining polynomial
\[ Q(\Ac) = \prod_{H\in\Ac} \alpha_H \]
where $H=\ker \alpha_H$, $\alpha_H\in V^*$. Define the \emph{module of $\Ac$-derivations} by
\[ D(\Ac) = \{\theta\in\Der(S)\mid \theta(Q(\Ac))\in Q(\Ac)S\}. \]
An arrangement $\Ac$ is called a \emph{free arrangement} if $D(\Ac)$ is a free
module over $S$.
If $\Ac$ is free and $\{\theta_1,\ldots,\theta_r\}$ is a homogeneous basis for
$D(\Ac)$, then $\pdeg\theta_1,\ldots,\pdeg\theta_r$ are called the \emph{exponents}
of $\Ac$ and we write
\[ \exp \Ac = \lmultiset \pdeg\theta_1,\ldots,\pdeg\theta_r\rmultiset, \]
where we use the notation $\lmultiset *\rmultiset$ to emphasize the fact that it is a multi\-set.
Remark that the exponents depend only on $\Ac$, thus by Theorem \ref{Thm4137} they depend only on $L(\Ac)$.
\end{defin}
\begin{defin}[{\cite[1.12-1.14]{OT}}]
Let $(\Ac,V)$ be an arrangement. We denote $L(\Ac)$ the set of all nonempty
intersections of elements of $\Ac$ including the empty intersection $V$.
For $X\in L(\Ac)$ define an arrangement $(\Ac^X,X)$ in $X$ by
\[ \Ac^X=\{X\cap H\mid X \not\subseteq H \mbox{ and } X\cap H\ne \emptyset\}.\]
We call $\Ac^X$ the {\it restriction} of $\Ac$ to $X$.
Let $H_0\in\Ac$. Let $\Ac'=\Ac\backslash\{H_0\}$ and let $\Ac''=\Ac^{H_0}$.
We call $(\Ac,\Ac',\Ac'')$ a {\it triple} of arrangements and $H_0$ the
{\it distinguished} hyperplane.
\end{defin}

\begin{theor}[Addition-Deletion, {\cite[Thm.~4.51]{OT}}]\label{adddel}
Suppose $\Ac\ne\emptyset$. Let $(\Ac,\Ac',\Ac'')$ be a triple.
Any two of the following statements imply the third:
\begin{eqnarray*}
\Ac \mbox{ is free with } \exp \Ac &=& \lmultiset b_1,\ldots,b_{r-1},b_r\rmultiset, \\
\Ac' \mbox{ is free with } \exp \Ac' &=& \lmultiset b_1,\ldots,b_{r-1},b_r-1\rmultiset, \\
\Ac'' \mbox{ is free with } \exp \Ac'' &=& \lmultiset b_1,\ldots,b_{r-1}\rmultiset.
\end{eqnarray*}
\end{theor}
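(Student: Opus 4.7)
The plan is to construct a natural exact sequence of graded $S$-modules linking $D(\Ac)$, $D(\Ac')$, and $D(\Ac'')$, then propagate freeness from any two to the third by combining a Hilbert series computation with Saito's criterion.

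First I would set up the algebraic relationship between the three modules. Let $\alpha_0=\alpha_{H_0}$ be a defining form of $H_0$, so that $Q(\Ac)=\alpha_0 Q(\Ac')$. The Leibniz rule gives $\theta(Q(\Ac))=\theta(\alpha_0)Q(\Ac')+\alpha_0\theta(Q(\Ac'))$, which identifies $D(\Ac)$ with the subset of $\theta\in D(\Ac')$ satisfying $\alpha_0\mid\theta(\alpha_0)$. Reducing the coefficients of $\theta\in D(\Ac')$ modulo $\alpha_0$ defines a graded $S$-linear map $\rho:D(\Ac')\to\Der(S/\alpha_0 S)$; using the identification of $S/\alpha_0 S$ with the coordinate ring of $H_0$, one verifies that $\mathrm{image}(\rho)\subseteq D(\Ac'')$.

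Next I would establish the exactness properties of this map. The crucial step is to show, under any two of the three freeness hypotheses with matching exponents, that the induced sequence (with appropriate degree shifts encoding the combinatorics of the exponent multisets) is short exact. Exactness in the middle is essentially formal from the Leibniz identification above. Surjectivity of $\rho$, which is the real content, is proved locally: at each codimension-two flat $X\subseteq H_0$ only a small number of hyperplanes of $\Ac$ are involved, so a direct generator-by-generator computation exhibits a preimage; because $D(\Ac'')$ is reflexive, codimension-two local surjectivity extends globally. From the short exact sequence one then reads off a Hilbert series identity that, combined with the two freeness hypotheses, forces the third module to have exactly the Hilbert series of a free module with the asserted exponent multiset.

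Finally, to lift matching Hilbert series to genuine freeness I would invoke Saito's criterion: the exact sequence supplies $r$ homogeneous candidate generators of the required polynomial degrees for the third module, and the determinant condition $\det(\theta_i(x_j))\in K^{\times}\cdot Q$ for the relevant defining polynomial reduces, via $Q(\Ac)=\alpha_0 Q(\Ac')$ and the image of $Q(\Ac')$ in $S/\alpha_0 S$, to an identity between bases of the two arrangements already known to be free. The main obstacle is the surjectivity of $\rho$: this is where the combinatorics of the triple intervenes nontrivially, and the local-to-global reflexivity argument is the only delicate step. Once surjectivity is secured, everything else is graded bookkeeping.
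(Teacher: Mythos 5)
The paper offers no proof of Theorem~\ref{adddel}: it is quoted from Orlik--Terao \cite[Thm.~4.51]{OT}, so your attempt can only be measured against the standard proof there. Your outline is in the spirit of that proof (an exact sequence relating the three derivation modules, degree bookkeeping, Saito's criterion), but two of your steps are genuinely wrong as stated.

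First, the restriction map $\rho$ is not defined on $D(\Ac')$. Reducing the coefficients of $\theta$ modulo $\alpha_0$ yields a derivation of $S/\alpha_0 S$ only when $\theta(\alpha_0)\in\alpha_0 S$, i.e.\ only for $\theta\in D(\Ac)$ (by your own Leibniz identification, $D(\Ac)=\{\theta\in D(\Ac')\mid \alpha_0\mid\theta(\alpha_0)\}$). The correct sequence is
\[ 0 \longrightarrow D(\Ac') \stackrel{\cdot\alpha_0}{\longrightarrow} D(\Ac) \stackrel{\rho}{\longrightarrow} D(\Ac''), \]
with $\ker\rho=\alpha_0 D(\Ac')$; the chain $D(\Ac)\hookrightarrow D(\Ac')\to D(\Ac'')$ you describe is not exact in the middle and its right-hand map is not even well defined.

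Second, and more seriously, your argument for surjectivity of $\rho$ cannot work. Surjectivity fails for a general triple --- it is precisely where the freeness hypotheses must enter --- so it cannot follow from a local computation at codimension-two flats that would be valid for every arrangement. The local-to-global step also fails on its own terms: reflexivity of $D(\Ac'')$ says that $D(\Ac'')$ is determined by its localizations in codimension one of $H_0$, but the image of $\rho$ is merely a submodule with the same localizations there, and a non-saturated submodule (e.g.\ the maximal ideal $\mathfrak{m}\subset S$) can agree with the ambient module in codimension one without being equal to it. In the actual proof, surjectivity in the directions where it is needed is extracted from Saito's criterion: one takes homogeneous bases of the two modules assumed free, uses the deletion--restriction identity $\chi(\Ac,t)=\chi(\Ac',t)-\chi(\Ac'',t)$ together with Theorem~\ref{Thm4137} to pin down that $\exp\Ac$ and $\exp\Ac'$ differ in exactly one entry by exactly $1$ (a step entirely missing from your sketch), and then computes the determinant of the restricted coefficient matrix to show that the images under $\rho$ generate a free module with determinant $Q(\Ac'')$ up to a nonzero scalar. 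Your final paragraph gestures at this determinant identity, but deploys it only after surjectivity has been assumed, whereas it is in fact the engine that proves it.
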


Inspired by this theorem, one defines:

\begin{defin}[{\cite[Def.~4.53]{OT}}]\label{def:indfree}
The class $\IF$ of \emph{inductively free} arrangements is the smallest
class of arrangements which satisfies
\begin{enumerate}
\item the empty arrangement $\Phi_\ell$ of rank $\ell$ is in $\IF$ for $\ell\ge 0$,
\item\label{part2} if there exists $H\in \Ac$ such that $\Ac''\in\IF$, $\Ac'\in\IF$, and
$\exp \Ac''\subset\exp \Ac'$, then $\Ac\in\IF$.
\end{enumerate}
\end{defin}

Thus the property of an arrangement $\Ac$ of being inductively free is a 
combinatorial property of its intersection lattice $L(\Ac)$.
A class of arrangements which is bigger than the class of inductively free ones is:

\begin{defin}[{\cite[Def.~4.60]{OT}}]\label{def:recfree}
The class $\RF$ of \emph{recursively free} arrangements is the smallest
class of arrangements which satisfies
\begin{enumerate}
\item The empty arrangement $\Phi_\ell$ of rank $\ell$ is in $\RF$ for $\ell\ge 0$,
\item if there exists $H\in \Ac$ such that $\Ac''\in\RF$, $\Ac'\in\RF$, and
$\exp \Ac''\subset\exp \Ac'$, then $\Ac\in\RF$,
\item if there exists $H\in \Ac$ such that $\Ac''\in\RF$, $\Ac\in\RF$, and
$\exp \Ac''\subset\exp \Ac$, then $\Ac'\in\RF$.
\end{enumerate}
\end{defin}

We need the following simple lemma (see also \cite[Lemma 3.1]{p-CH-13} or \cite[Cor.\ 2.18]{p-HR12}):

\begin{lemma}\label{strnotindfree}
Let $\Ac$ be a free arrangement in $K^3$ with exponents $\lmultiset 1$, $e$, $f\rmultiset$.
If $|\Ac^H| \notin\{e+1,f+1\}$ for all $H\in\Ac$, then $\Ac$ is not inductively free.
\end{lemma}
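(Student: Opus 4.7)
The plan is to argue by contradiction: suppose $\Ac$ is inductively free. Since $\Ac$ is nonempty (it has positive exponents), clause (\ref{part2}) of Definition \ref{def:indfree} supplies a hyperplane $H\in\Ac$ with $\Ac'=\Ac\setminus\{H\}$ and $\Ac''=\Ac^H$ both inductively free and $\exp\Ac''\subset\exp\Ac'$. In particular all three arrangements are free, so Theorem \ref{adddel} applies and the multiset $\exp\Ac''$ is obtained from $\exp\Ac=\lmultiset 1,e,f\rmultiset$ by dropping a single exponent.

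The key observation I would then invoke is that $\Ac''$ lives in the two-dimensional space $H$, and a central arrangement in $K^2$ is automatically free: if $|\Ac''|=n$, then $\exp\Ac''=\lmultiset 1,n-1\rmultiset$ (the Euler derivation accounts for the exponent $1$, and the remaining exponent is forced by $\chi(\Ac'',t)=(t-1)(t-(n-1))$ via Theorem \ref{Thm4137}, or equivalently because the generic derivation of degree $n-1$ vanishing on the product of linear forms completes a basis).

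Combining the two observations, $\exp\Ac''$ must both contain $1$ and be a two-element sub-multiset of $\lmultiset 1,e,f\rmultiset$, so $\exp\Ac''\in\{\lmultiset 1,e\rmultiset,\lmultiset 1,f\rmultiset\}$ (the case where the dropped exponent is $1$ is covered by this list as well, since then $e=1$ or $f=1$). Consequently $|\Ac^H|=|\Ac''|\in\{e+1,f+1\}$, which contradicts the hypothesis of the lemma.

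There is really no serious obstacle; the statement is a direct corollary of the Addition-Deletion theorem together with the fact that every rank-two central arrangement is free with exponents $\lmultiset 1,n-1\rmultiset$. The only point that requires a moment's care is to notice that we do not have to consider the possibility of $\exp\Ac''$ consisting of $\lmultiset e,f\rmultiset$ without a $1$, which is ruled out by the presence of the Euler derivation on any restriction to a hyperplane.
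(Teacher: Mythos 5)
Your argument is correct and coincides with the standard proof: the paper itself does not prove Lemma \ref{strnotindfree} but refers to \cite[Lemma 3.1]{p-CH-13} and \cite[Cor.\ 2.18]{p-HR12}, where the reasoning is exactly yours --- the defining condition of inductive freeness together with Addition--Deletion forces $\exp\Ac''$ to be a two-element sub-multiset of $\lmultiset 1,e,f\rmultiset$, and since the rank-two restriction $\Ac^H$ has exponents $\lmultiset 1,|\Ac^H|-1\rmultiset$, one gets $|\Ac^H|\in\{e+1,f+1\}$. Nothing to add.
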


For $\alpha=(a_1,\ldots,a_r)\in K^r$ we will write
\[ \ker \alpha := \left\{ (x_1,\ldots,x_r)\in K^r \mid \sum_{i=1}^r a_i x_i =0\right\}. \]

\section{Intersection lattices and moduli spaces}\label{modulispace}

Given a matrix $A\in K^{r\times n}$, let $\Ac(A)$ be the arrangement
\[ \Ac(A) := \{ \ker \alpha \mid \alpha \text{ is a column vector of } A, \:\: \alpha\ne 0 \}. \]
Thus the map $A \mapsto \Ac(A)$ is a surjection onto the set of arrangements in $K^r$ with at most $n$ hyperplanes.

Assume that $(L,\le)$ is a graded lattice, i.e.\ all maximal chains in $L$ have the same length. This implies the existence of a map $\rk : L \rightarrow \ZZ$ such that for fixed $n\in\ZZ$ the set $\{U\in L\mid \rk(U)=n\}$ is an antichain.

For a fixed field, the set $\Ac(L)$ consisting of those arrangements $\Ac$ with $L(\Ac)\cong L$ is not an algebraic variety.
However, in dimension three it is a constructible set:
\begin{propo}
Let $L$ be a graded lattice with $\rk(L)=\{0,\ldots,3\}$, $K$ be a field, and
\[ \Vc(L) := \{ A \in K^{3\times n} \mid |\Ac(A)|=n, \:\: L(\Ac(A))\cong L\} \]
where $n = \{U\in L\mid \rk(U)=1\}$.

Then $\Vc(L)$ is a constructible set, i.e., there exist varieties $V_1,V_2$ in $K^{3\times n}$ such that $\Vc(L) = V_1\backslash V_2$.
Moreover, $\{\Ac(A)\mid A\in \Vc(L)\}$ is the set of all arrangements $\Ac\in K^r$ with $L(\Ac)\cong L$.
\end{propo}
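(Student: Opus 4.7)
The plan is to translate the condition $L(\Ac(A))\cong L$ into polynomial equations and inequations in the $3n$ entries of $A$, thereby exhibiting $\Vc(L)$ as a finite Boolean combination of Zariski-closed subsets of $K^{3\times n}$. Since $\rk(L)=3$, the isomorphism type of $L(\Ac(A))$ is determined by the rank-$1$ elements (the hyperplanes of $\Ac(A)$) together with the ``collinearity'' datum recording which rank-$1$ elements lie below a common rank-$2$ element (i.e.\ which hyperplanes share a common line in $K^3$). The key elementary observation is that $\ker\alpha_i,\ker\alpha_j,\ker\alpha_k\subset K^3$ meet in a line precisely when $\det(\alpha_i,\alpha_j,\alpha_k)=0$, so the whole rank-$2$ combinatorics of $\Ac(A)$ is read off from the vanishing pattern of the $3\times 3$ minors of $A$.

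Concretely, I would label the rank-$1$ elements of $L$ as $H_1,\ldots,H_n$ and extract from $L$ the family $\Pc$ of subsets $S\subseteq\{1,\ldots,n\}$ with $|S|\ge 3$ such that $\{H_i\mid i\in S\}$ lies below a common rank-$2$ element of $L$. For each permutation $\sigma\in\Sym_n$, let $C_\sigma\subseteq K^{3\times n}$ consist of those $A$ such that: (i) all columns of $A$ are nonzero and pairwise non-proportional (equivalent to $|\Ac(A)|=n$, open); (ii) for every $S\in\Pc$ and every triple $\{i,j,k\}\subseteq S$, the minor $\det(\alpha_{\sigma(i)},\alpha_{\sigma(j)},\alpha_{\sigma(k)})$ vanishes (closed); and (iii) for every triple $\{i,j,k\}\subseteq\{1,\ldots,n\}$ not contained in any $S\in\Pc$, the same minor is nonzero (open). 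Each $C_\sigma$ is locally closed, and $L(\Ac(A))\cong L$ is precisely the existence of some $\sigma\in\Sym_n$ with $A\in C_\sigma$; hence $\Vc(L)=\bigcup_{\sigma\in\Sym_n}C_\sigma$ is a finite union of locally closed sets, in particular constructible. To package this into the form $V_1\setminus V_2$ of the statement, I would take $V_1$ to be the union of the Zariski closures of the $C_\sigma$ and set $V_2:=V_1\setminus\Vc(L)$; this $V_2$ is closed in $V_1$ because a point of $V_1\setminus\Vc(L)$ must fail at least one of the open conditions in (i) or (iii) for \emph{every} $\sigma$, and each such failure is a closed condition.

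The \emph{moreover} statement is then direct: given any $\Ac\subset K^3$ with $L(\Ac)\cong L$, fix a lattice isomorphism, choose for each $H_i$ any defining linear form of its image in $\Ac$, and put these forms as the columns of a matrix $A\in K^{3\times n}$; then by construction $\Ac(A)=\Ac$ and $A\in\Vc(L)$. The main obstacle I foresee is the bookkeeping surrounding the existential quantifier over labelings $\sigma$ when writing $\Vc(L)$ in the precise form $V_1\setminus V_2$; apart from this, the proof is an essentially symbolic unravelling of the definitions, with all nontrivial geometric content captured by the correspondence ``common line $\Leftrightarrow$ vanishing $3\times 3$ minor''.
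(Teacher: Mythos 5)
Your proposal follows the same route as the paper's (very terse) proof: translate the lattice isomorphism into the vanishing/non-vanishing pattern of the $3\times 3$ minors of $A$, so that the dependent triples contribute equations and the independent triples contribute inequations. You are in fact more careful than the paper, which silently fixes a labeling of the columns by the atoms of $L$ and does not address the existential quantifier over $\sigma$; your decomposition $\Vc(L)=\bigcup_\sigma C_\sigma$ makes that point explicit, and the ``moreover'' clause is handled correctly.

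The one place where your argument is incomplete is the packaging step: you assert that every point of $V_1\setminus\Vc(L)$ fails one of the \emph{open} conditions (i) or (iii) for \emph{every} $\sigma$, but give no reason. This needs an argument, since a finite union of locally closed sets is in general \emph{not} of the form $V_1\setminus V_2$ with $V_1,V_2$ closed (e.g.\ $\{(0,1)\}\cup(\mathbb{A}^2\setminus\{x=0\})$ in the plane). The claim is nevertheless true here, for the following reason. Write $Z_\sigma$ for the closed condition (ii) and $U_\sigma$ for the open conditions (i) and (iii), so $C_\sigma=Z_\sigma\cap U_\sigma$. Suppose $A\in\overline{C_\sigma}\subseteq Z_\sigma$ but $A\in U_\tau$ for some $\tau$. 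Then the set $M(A)$ of dependent triples of $A$ satisfies $\sigma(\Pc)\subseteq M(A)\subseteq\tau(\Pc)$ (where $\Pc$ now denotes the set of dependent triples prescribed by $L$); since $|\sigma(\Pc)|=|\tau(\Pc)|$, all three sets coincide, so $A\in C_\tau\subseteq\Vc(L)$. Hence $V_1\setminus\Vc(L)=V_1\cap\bigcap_\sigma U_\sigma^{\,c}$, which is closed. With this one-line addition (resting precisely on the fact that all $\sigma$-images of the dependency pattern have the same cardinality, so containment forces equality), your proof is complete and, modulo the extra bookkeeping, coincides with the paper's.
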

\begin{proof}
The isomorphism $L(\Ac(A))\cong L$ implies that we know exactly which subsets of the columns of $A$ are linearly independent or linearly dependent. These dependencies and independencies translate into equations of the form $\det(v_1,v_2,v_3)=0$ or $\det(v_1,v_2,v_3)\ne 0$ respectively, thus contribute to a variety $V_1$ or $V_2$ respectively.
\end{proof}

If $L$ is a poset, we write $\Aut(L)$ for the set of automorphisms of posets, i.e.\ the set of bijections preserving the relation in the poset.

\section{Examples}

\subsection{Arrangements with $13$ hyperplanes}

Let $R\subseteq \ZZ[t]^3$ be the set
\begin{eqnarray*}
R &=& \{ (1,0,0),(0,1,0),(0,0,1),(1,0,-1),(0,1,-1),(1,1,-1), \\
&& (1,0,-t),(0,1,-t),(1,1,-t),(1,1,-t-1), \\
&& (t,1,-t),(1,-t+1,-1),(t-1,t,-t^2) \}.
\end{eqnarray*}
We first consider the arrangement $\Ac_t = \{\ker_{\QQ(t)} \alpha\mid \alpha \in R\}$ as an arrangement in the space $\QQ(t)^3$.
Labeling the hyperplanes of $\Ac_t$ by $1,\ldots,13$ (in the ordering given by the definition of $R$) and the one dimensional elements by $1,\ldots,30$, the intersection lattice $L(\Ac_t)$ can be described by the following sequence of lists:
\begin{align*}
&\left[1, 2, 3, 4, 5, 6\right],\left[1, 7, 8, 9, 10, 11\right],\left[2, 7, 12, 13, 14, 15\right],\\
&\left[2, 8, 16, 17, 18, 19\right],\left[3, 7, 16, 20, 21, 22\right],\left[3, 8, 12, 23, 24, 25\right],\\
&\left[2, 9, 20, 23, 26, 27\right],\left[4, 7, 17, 24, 26, 28\right],\left[4, 9, 12, 18, 21, 29\right],\\
&\left[5, 10, 12, 17, 20, 30\right],\left[4, 8, 13, 22, 27, 30\right],\left[6, 8, 14, 20, 28, 29\right],\\
&\left[4, 11, 15, 19, 20, 25\right],
\end{align*}
for example the third list
encodes that (the projective points) $2$, $7$, $12$, $13$, $14$, $15$ lie on hyperplane number three.
From the intersection lattice we can compute
\[ \chi_{\Ac_t}(x) = (x-1)(x-6)^2, \]
and we see that $\Ac_t$ is not inductively free by Lemma \ref{strnotindfree}.

For any field extension $K/\QQ$ and each $\omega \in K$, we also have a corresponding arrangement
\[ \Ac_\omega:=\{ \ker_K \alpha(\omega) \mid \alpha\in R\} \]
in $K^3$.
It is easy to check that $L(\Ac_\omega)\ne L(\Ac_t)$ if and only if
\[ \omega \in Z := \left\{ -1,0,\frac{1}{2},1,2,\frac{1\pm \sqrt{-3}}{2} \right\}. \]
If $\omega\in \{0,1,\frac{1\pm \sqrt{-3}}{2}\}$ then $|\Ac_t|\ne|\Ac_\omega|$.
For $\omega\in \{-1,\frac{1}{2},2\}$ we have $|\Ac_t|=|\Ac_\omega|$, but the intersection lattices are different.
See Figure \ref{a13m1} for a picture in the real projective plane of one of the $\Ac_\omega$ with $\omega\notin Z$.

Moreover, using the algorithm of \cite{p-C10b} we get
\[ \Vc_\CC(L(\Ac_t)) = \{ \Ac_\omega \mid \omega \in \CC\backslash Z\}. \]
Thus the moduli space of $L(\Ac_t)$ is one dimensional. A short computation with Singular shows that every arrangement in $\Vc_\CC(L(\Ac_t))$ is free. Further, $\Ac_t$ is free but not recursively free.
It is remarkable that the arrangements $\Ac_\omega$, $\omega\in \{-1,\frac{1}{2},2\}$ are free as well, but their exponents are $\lmultiset 1,5,7 \rmultiset$.

The symmetry group of $L(\Ac_t)$ is
\[ \Aut(L(\Ac_t)) \cong \ZZ/3\ZZ \rtimes S_3. \]

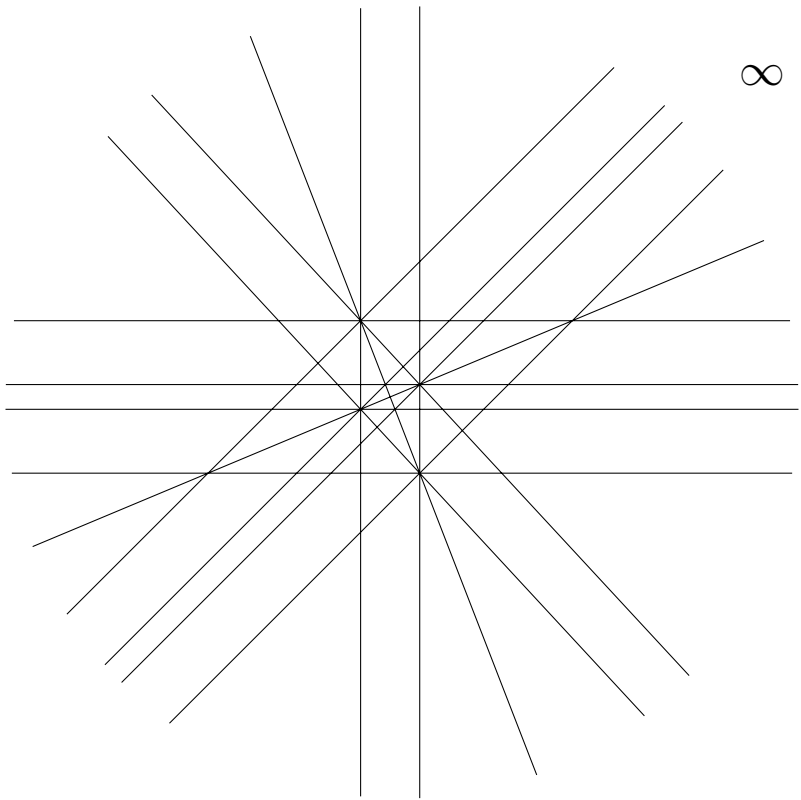
\begin{figure}
\begin{center}
\setlength{\unitlength}{0.75pt}
\begin{picture}(400,400)(100,200)
\moveto(496.773982019981493284007282848,364.222912360003364857453221300)
\lineto(103.226017980018506715992717152,364.222912360003364857453221300)
\moveto(499.799899899874824737086828568,408.944271909999158785636694675)
\lineto(100.200100100125175262913171431,408.944271909999158785636694675)
\moveto(495.715111622386654550161466794,441.177604139103725326976197484)
\lineto(104.284888377613345449838533206,441.177604139103725326976197484)
\moveto(462.003080212687308662930982098,517.281720662691514734747508724)
\lineto(182.718279337308485265252491277,237.996919787312691337069017902)
\moveto(441.421356237309504880168872421,541.421356237309504880168872421)
\lineto(158.578643762690495119831127579,258.578643762690495119831127579)
\moveto(406.952750824954505690978417484,569.000322754056281424622386528)
\lineto(130.999677245943718575377613472,293.047249175045494309021582516)
\moveto(308.944271909999158785636694675,599.799899899874824737086828568)
\lineto(308.944271909999158785636694675,200.200100100125175262913171431)
\moveto(367.958188185589464486680994098,211.899801545739966944654699262)
\lineto(223.491767387609719016567995648,584.787689910687449561575785165)
\moveto(444.811903076327848709446111418,262.052500104524442500854566083)
\lineto(173.745865467501343672573169961,555.112518880487948495621622922)
\moveto(499.968602029388036590795160647,396.456244589107931398792724109)
\lineto(100.031397970611963409204839353,396.456244589107931398792724109)
\moveto(432.492660930138327285243171407,549.818873309244309090703667076)
\lineto(150.181126690755690909296332923,267.507339069861672714756828593)
\moveto(422.256045515541062845432718669,241.717154009343286710386751204)
\lineto(151.716113756179965018531360123,534.208379322715313821763695360)
\moveto(482.566108371663406313081719590,481.667717453263334783894740716)
\lineto(113.729449702885217465211784879,327.176363095420508928667332648)
\moveto(279.130032210001949593332228440,598.908130664496578018818014653)
\lineto(279.130032210001949593332228440,201.091869335503421981181985346)
\thinlines
\strokepath
\put(470,560){\Large $\infty$}
\end{picture}
\end{center}
\caption{An arrangement $A_\omega$ with $15$ hyperplanes.}
\end{figure}

\subsection{Arrangements with $15$ hyperplanes}

Now let $R\subseteq \ZZ[t]^3$ be the set
\begin{eqnarray*}
R &=& \{ (1,0,0),(1,1,0),(1,0,1),(1,1,1),(1,t,1),(0,1,0),(2,1,1),\\
&& (t+1,t,1),(t+1,1,1),(2t,t,1),(1,-t+1,1),\\
&& (-3t+1,t^2-3t+1,-t),(3t-1,t,t),\\ && (-3t+1,-t^2,-t),(3t-1,2t-1,t) \}.
\end{eqnarray*}
With the analogous notations $\Ac_t$, $\Ac_\omega$ and similar computations as in the example with $13$ hyperplanes, we get:
\begin{enumerate}
\item If $\omega\in \{0, 1, -1, \frac{1}{2}\}$ then $|\Ac_t|\ne|\Ac_\omega|$.
\item If $\omega\in \{\frac{3}{2}\pm \sqrt{2},(-1\pm \sqrt{5})/2\}$ then $|\Ac_t|=|\Ac_\omega|$, but the intersection lattices are different.
\item $\Vc_\CC(L(\Ac_t)) = \{ \Ac_\omega \mid \omega \in \CC\backslash Z\}$ for $$Z=\left\{0, 1, -1, \frac{1}{2},\frac{3}{2}\pm \sqrt{2},\frac{-1\pm \sqrt{5}}{2}\right\}.$$
\item The arrangement in $\Ac_t$ is free, but not recursively free with exponents $\lmultiset 1,7,7\rmultiset$.
\item If $\omega\in \{\frac{3}{2}\pm \sqrt{2},(-1\pm \sqrt{5})/2\}$ then $\Ac_\omega$ is free with exponents $\lmultiset 1,5,9\rmultiset$.
\item The moduli space of $L(\Ac_t)$ is one dimensional.
\item The symmetry group of $L(\Ac_t)$ is $\Aut(L(\Ac_t)) \cong \ZZ/2\ZZ \wr S_3$, thus isomorphic to the reflection group of type $B_3$.

\end{enumerate}

\def\cprime{$'$}
\providecommand{\bysame}{\leavevmode\hbox to3em{\hrulefill}\thinspace}
\providecommand{\MR}{\relax\ifhmode\unskip\space\fi MR }
\providecommand{\MRhref}[2]{%
  \href{http://www.ams.org/mathscinet-getitem?mr=#1}{#2}
}
\providecommand{\href}[2]{#2}

\end{document}